\documentclass{amsart}
\newtheorem{theorem}{Theorem}[section]
\newtheorem{corollary}[theorem]{Corollary}
\newtheorem{problem}[theorem]{Problem}

\newtheorem{proposition}[theorem]{Proposition}
\newtheorem{remark}[theorem]{Remark}
\title[On the first passage time of a continuous Martingale]{On the first passage time density of a continuous Martingale over a moving boundary}
\author{Gerardo Hernandez-del-Valle}
\address{Statistics Department, Columbia University\\Mail Code 4403, New York, N.Y.}\email{gerardo@stat.columbia.edu}
\subjclass[2000]{Primary: 60J65,45D05,60J60; Secondary: 45G15,
45G10, 45Q05, 45K05.}
\keywords{First passage time, Bessel bridge, Cauchy problem, Schr\"odinger's equation with time dependent potential}
\date{March 2008}
\begin{document}
\maketitle
\begin{abstract}

In this paper we derive the density $\varphi$ of the first time $T$ that a continuous martingale $M$ with non-random quadratic variation $\langle M\rangle_\cdot:=\int_0^\cdot h^2(u)du$ hits a moving boundary $f$ which is twice continuously differentiable, and $f'/h\in\mathbb{C}^2[0,\infty)$. 

Thus, this work is an extension to case in which $M$ is in fact a one-dimensional standard Brownian motion $B$, as studied in Hernandez-del-Valle (2007).
\end{abstract}
\section{Introduction}
 Patie (2004) [Section 3.6.1], for instance, has pointed out the existing relationship between first passage time problems and 3-dimensional Bessel bridges. In Hernandez-del-Valle (2007), the author uses this observation to derive the density of the first time $T$ that a one-dimensional, standard Brownian motion $B$ reaches the twice continuously differentiable boundary $f$. 

The idea is to first transform the first-passage time into a Cauchy problem, via Girsanov's and Feynman-Kac's theorems [as has been done before, for instance, by: Groeneboom (1987), Martin-L\"of (1998), Novikov (1981), and Salminen (1988)]. Next, after properly manipulating the corresponding {\it parabolic\/} partial differential equation, it is possible to obtain Schr\"odinger's equation for time-dependent linear potential which has been solved analytically by Feng (2001).

In this paper we derive the density $\varphi$ of the first time that a continuous martingale $M$ with non-random quadratic variation $\langle M\rangle_\cdot:=\int_0^\cdot h^2(u)du$ reaches a moving bounday $f$, {\it i.e.\/}
\begin{eqnarray*}
T:=\inf\left\{t\geq 0|M_t=f(t)\right\},
\end{eqnarray*}
provided that the following two conditions hold:
\begin{eqnarray*}
f\in\mathbb{C}^2[0,\infty)\quad\hbox{and}\quad f'/h\in\mathbb{C}^2[0,\infty).
\end{eqnarray*}
The idea, once more, is to first derive and next solve a Cauchy problem, which in this case, is of the following form: 
\begin{eqnarray}
\nonumber-\frac{\partial\varphi}{\partial t}(t,a)&=&-\beta'(t)a\varphi(t,a)+\frac{1}{2}h^2(t)\frac{\partial\varphi}{\partial a^2}(t,a)\\ 
\label{cc}&&+h^2(t)\left(\frac{1}{a}-\frac{a}{\int_t^sh^2(u)du}\right)\frac{\partial\varphi}{\partial a}(t,a)\quad [0,s)\times(0,\infty)\\
\nonumber v(s,a)&=&1,\enskip\hbox{where}\\
\nonumber\beta(u)&:=&\frac{f'(u)}{h^2(u)}\qquad u\geq 0.
\end{eqnarray}

To understand better the application of this result, let us start with an example: Suppose we want to find the density of first time $T$ that a mean-reverting Ornstein-Uhlenbeck process $X:=\{X_t,t\geq 0\}$ starting at zero, and with unit diffusion coefficient will hit a moving boundary $f$, {\it i.e.\/}
\begin{equation}\label{or}
T:=\inf\left\{t\geq 0|X_t=f(t)\right\}.
\end{equation}
It is the solution to this type of problems that we tackle in this paper, since equation (\ref{or}) is equivalent to: 
\begin{equation*}
T:=\inf\left\{t\geq 0\Bigg{|}\int_0^te^udB_u=f(t)e^t\right\}
\end{equation*}
where $B$ is a Brownian motion. 

Furthermore, observe that this type of results may in principle be used for the pricing of {\it moving\/} barrier options, for instance, Lo and Hui (2006) study the case in which the driving process is a constant elasticity of variance (CEV) process. For the case in which the barrier is constant and the process is geometric Brownian motion, the reader may consult Karatzas (1997). 

The paper is organized as follows: In Section \ref{2} we introduce the {\it problem\/} and state our main results, which we will pursue in the remainder of the paper. Next, in Section \ref{3} we introduce some auxiliary results, namely: (1) the density $\varphi_a$ of the first time $T_a$ that  a continuous martingale $\tilde{M}$ starting at zero reaches level $a>0$, (2) the transition density of $\tilde{M}$ or its associated Green's function, and (3) the transition density of $\tilde{M}$ conditioned to reach level $a$ for the first time at time $s$ [see for instance: Imhof (1984) or Revuz and Yor (2005)].

We devote Section \ref{4} to verify that the dynamics  of the conditioned process $(\tilde{M}|T_a=s)$ indeed satisfies the transition density derived in Section \ref{3} by obtaining, once again, its Green's function. Finally, after deriving the corresponding Cauchy problem (\ref{cc}) in Section \ref{5}, we solve it in Section \ref{6}.
\begin{remark}
 For brevity we have not explicitly stated Feynman-Kac's theorem, we thus suggest the reader to consult for instance: Theorem 5.7.6, and Definition 5.7.9 in Karatzas and Shreve (1991). Finally, we would like to mention that  the tools used in addressing
  the parabolic p.d.e's studied in this paper,  are very much in the spirit of Takamizawa and Shoji (2003), and Feng (2001).
\end{remark}

\section{The problem and main results}\label{2}

For expository purposes, we present our main results in this section, and devote the remainder of the paper to their proof.

\begin{problem}\label{prob1} The main motivation of this work is finding the density $\varphi_T$ of the first time $T$, that a one-dimensional, continuous martingale $M$ with non-random quadratic variation
\begin{equation*}
\langle M\rangle_t:=\int_0^th^2(u)du,\quad t\geq 0,
\end{equation*}
 reaches the twice continuously differentiable moving boundary $f$:
\begin{equation}\label{eq}
T:=\inf\left\{t\geq 0|M_t=f(t)\right\},
\end{equation}
given the following two constraints:
\begin{eqnarray*}
f\in\mathbb{C}^2[0,\infty)\quad\hbox{and}\quad f'/h\in\mathbb{C}^2[0,\infty).
\end{eqnarray*}
Namely, the continuous martingale $M$ has the following dynamics:
\begin{equation*}
dM_t=h(t)dB_t,\qquad M_0=0,\enskip t\geq 0
\end{equation*}
where $B$ is a one-dimensional standard Brownian motion under some measure $\mathbb{P}$.
\end{problem}
Alternatively, from an application of Girsanov's theorem and the forthcoming careful construction (in Section \ref{4}) of a Markov process $\tilde{Y}$ which starts at level $a$ and reaches for the first time zero at time $s$ ({\it i.e.\/} $\tilde{Y}$ is a Bessel bridge type process), it follows that Problem \ref{prob1} may be restated as:
\begin{theorem}\label{prop1}
Given that: $(i)$
\begin{equation}\label{cons}
\varphi_a(t)=\frac{ah^2(t)}{\sqrt{2\pi\left(\int_0^th^2(u)du\right)^3}}\exp\left\{-\frac{a^2}{2 \int_0^t h^2(u)du}\right\}
\end{equation}
is the density of the first  time  that a continuous martingale $M$, with quadratic variation $\langle M\rangle_\cdot=\int_0^\cdot h^2(u)du$, reaches the fixed level $a\in(0,\infty)$. $(ii)$ The process $\tilde{Y}$ has the following dynamics:
\begin{equation}\label{di}
d\tilde{Y}_t=h(t)dW_t+h^2(t)\left(\frac{1}{\tilde{Y}_t}-\frac{\tilde{Y}_t}{\int_t^sh^2(u)du}\right)dt,\quad \tilde{Y}_0=a\in(0,\infty),\enskip t\in[0,s]
\end{equation}
(where $W$ is a Wiener process). Then the distribution of $T$, defined in equation (\ref{eq}), equals
\begin{eqnarray}
\nonumber\mathbb{P}(T<t)&=&\int_0^t\tilde{\mathbb{E}}\left[\exp\left\{-\int_0^s\beta'(u)\tilde{Y}_udu\right\}\right]\\
\label{eq2}&&\times\exp\left\{-\beta(s)a+a\int_0^s\beta'(u)du-\frac{1}{2}\int_0^s b(u)\beta(u)du\right\}\varphi_a(s)ds
\end{eqnarray}
provided that both: $f\in\mathbb{C}^2[0,\infty)$ and $f'/h\in \mathbb{C}^2[0,\infty)$ and given that
$\beta(u):=b(u)/h^2(u)$, and $\,b(u):=f'(u)$ for $u\in\mathbb{R}^+$. 
\end{theorem}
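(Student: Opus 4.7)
The plan is to reduce the moving-boundary problem to the first hitting of the fixed level $a:=f(0)$ by a continuous martingale via Girsanov's theorem, and then to unwind the resulting Radon--Nikodym density as a functional of the process $\tilde Y$ of (\ref{di}), which by the construction of Section \ref{4} is precisely the continuous martingale conditioned on the value of its first passage time. Concretely, I would set $Z_t:=M_t-f(t)+f(0)$, so that $Z_0=0$, $dZ_t=h(t)dB_t-b(t)dt$, and $T=\inf\{t\ge 0:Z_t=a\}$. Let $\Lambda_t:=\exp\{\int_0^t\beta(u)h(u)\,dB_u-\tfrac12\int_0^t\beta^2(u)h^2(u)\,du\}$, define $\tilde{\mathbb P}$ by $d\tilde{\mathbb P}/d\mathbb P|_{\mathcal F_t}=\Lambda_t$, and verify that under $\tilde{\mathbb P}$ the process $Z$ is a continuous martingale with quadratic variation $\int_0^\cdot h^2(u)du$. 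Hypothesis (i) then identifies the $\tilde{\mathbb P}$-density of $T$ as $\varphi_a$ in (\ref{cons}).

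Next I would use $\mathbb P(T\in ds)=\tilde{\mathbb E}[\Lambda_s^{-1}\,1_{\{T\in ds\}}]$. On $\{T=s\}$, rewriting $\Lambda_s^{-1}$ in the $\tilde{\mathbb P}$-variables (using $h(u)d\tilde B_u=dZ_u$ and $\beta(u)h^2(u)=b(u)$) gives $\Lambda_s^{-1}=\exp\{-\int_0^s\beta(u)dZ_u-\tfrac12\int_0^s\beta(u)b(u)du\}$. Since $\beta$ is deterministic and $\mathbb C^2$, It\^o's formula applied to $t\mapsto\beta(t)Z_t$ yields $\int_0^s\beta(u)dZ_u=\beta(s)Z_s-\int_0^s\beta'(u)Z_u\,du$. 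Substituting $Z_s=a$ and $Z_u=a-\tilde Y_u$ with $\tilde Y_u:=a-Z_u$ splits this into the deterministic boundary term $\beta(s)a-a\int_0^s\beta'(u)du$ plus the path term $\int_0^s\beta'(u)\tilde Y_u\,du$, producing exactly the factorisation $\exp\{-\beta(s)a+a\int_0^s\beta'(u)du-\tfrac12\int_0^s b(u)\beta(u)du\}\,\exp\{-\int_0^s\beta'(u)\tilde Y_u\,du\}$ appearing in (\ref{eq2}).

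To close, I would invoke the identification (to be established in Section \ref{4}) of the regular conditional law $\tilde{\mathbb P}(a-Z\in\cdot\mid T=s)$ with the law of the Markov process (\ref{di}), take the conditional expectation in the previous display, and combine with $\tilde{\mathbb P}(T\in ds)=\varphi_a(s)ds$ to obtain the product representation of $\mathbb P(T\in ds)$; integration over $s\in[0,t]$ then produces (\ref{eq2}).

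The hard part is this last input: rigorously identifying the conditional law of a continuous martingale with non-random quadratic variation, given the value of its first passage time of a fixed level, as the explicit bridge diffusion (\ref{di}). In the Brownian case this is the classical link to the $3$-dimensional Bessel bridge (Imhof; Revuz--Yor), but for general $h$ one needs to carry that identification through the deterministic Dambis--Dubins--Schwarz clock change $\tau(t):=\int_0^t h^2(u)du$ in order to recover precisely the drift $h^2(t)\bigl(1/\tilde Y_t-\tilde Y_t/\int_t^s h^2(u)du\bigr)$ of (\ref{di}); this is exactly the task Section \ref{4} is designed to carry out. Beyond that, Girsanov's hypotheses are automatic since $\beta=f'/h^2\in\mathbb C^2[0,\infty)$ makes $\beta h$ bounded on each $[0,s]$ and Novikov's condition trivially applies, and all remaining manipulations are routine.
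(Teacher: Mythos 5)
Your proposal is correct and follows essentially the same route as the paper: translate the boundary into the level $a=f(0)$ via $Z_t=M_t-f(t)+f(0)$ (the paper's $\tilde M$), apply Girsanov to make $Z$ a driftless continuous martingale, unwind the Radon--Nikodym density by integrating $\int_0^s\beta(u)\,dZ_u$ by parts, and then condition on $\{T=s\}$ using the bridge identification $\tilde M=a-\tilde Y$ from Section \ref{4} together with $\varphi_a$. The only cosmetic difference is that you work with $\mathbb{P}(T\in ds)=\tilde{\mathbb{E}}[\Lambda_s^{-1}\mathbb{I}_{\{T\in ds\}}]$ directly, whereas the paper writes $\mathbb{P}(T<t)$ and invokes optional sampling to replace $t$ by $T$ in the exponent; these are equivalent, and you correctly flag the conditional-law identification as the substantive input supplied elsewhere in the paper.
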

From (\ref{eq2}) it becomes clear, that  our  goal is  thus to compute the following expected value:
\begin{equation}\label{hec}
\tilde{\mathbb{E}}\left[\exp\left\{-\int_0^s\beta'(u)\tilde{Y}_udu\right\}\right].
\end{equation}

We shall prove, in Section \ref{6},  that in fact equation (\ref{hec})  has the following solution:
\begin{theorem} Let: (i) $f\in\mathbb{C}^2[0,\infty)$ and $f/h\in\mathbb{C}^2[0,\infty)$, (ii) $\beta'(u)\geq 0$ for all $u>0$ and, (iii) the process $\tilde{Y}$ is as in  equation (\ref{di}) in Theorem \ref{prop1}. Then
\begin{equation*}
\tilde{\mathbb{E}}^{t,a}\left[\exp\left\{-\int_t^s\beta'(u)\tilde{Y}_udu\right\}\right]=\int_0^\infty G(t,a;s,b)db.
\end{equation*}
where:\\[0.1cm]
Given that
\begin{eqnarray*}
\tilde{G}(t',y;\tau',z):=\frac{1}{\sqrt{2\pi(\tau'-t')}}\left[\exp\left\{-\frac{(z-y)^2}{2(\tau'-t')}\right\}-\exp\left\{-\frac{(z+y)^2}{2(\tau'-t')}\right\}\right]
\end{eqnarray*}
[for $0\leq t<\tau\leq s$ and $a,b\in(0,\infty)$], and
$\tau'-t:'=\int_t^\tau h^2(u)du$, $y:=a+v(t)$, $z:=b+\tilde{v}(\tau)$ are obtained by solving:
\begin{eqnarray*}
&&\pi_t(t)=\beta'(t)\enskip\qquad\quad
v'(t)=-h^2(t)\pi(t)\\
&&\tilde{\pi}_\tau(\tau)=-\tilde{\beta}'(\tau)\qquad \tilde{v}'(\tau)=h^2(\tau)\tilde{\pi}(\tau).
\end{eqnarray*} 
Then the Green's function $G$ equals:
\begin{equation*}
G(t,a;\tau,z)= \exp\left\{b\tilde{\pi}(\tau)+a\pi(t)+\frac{1}{2}\int_t^\tau h^2(u)\pi^2(u)\right\}\tilde{G}(t',y;\tau',z).
\end{equation*}

\end{theorem}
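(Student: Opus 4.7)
The plan is to read the expectation as a Feynman--Kac representation of the Cauchy problem (\ref{cc}) and to construct its Green's function through transformations that reduce the problem to the half-line heat equation.

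By Feynman--Kac applied to the SDE (\ref{di}) with potential $\beta'(t)a$, the function
\[
u(t,a):=\tilde{\mathbb{E}}^{t,a}\Bigl[\exp\Bigl\{-\int_t^s\beta'(u)\tilde Y_u\,du\Bigr\}\Bigr]
\]
solves (\ref{cc}) on $[0,s)\times(0,\infty)$ with terminal data $u(s,\cdot)=1$ and a vanishing-at-zero condition inherited from the strict positivity of the Bessel-bridge-type process $\tilde Y$. It therefore suffices to verify that the stated $G$ is the Green's function of this boundary-value problem, because superposition against the constant terminal data then yields $u(t,a)=\int_0^\infty G(t,a;s,b)\,db$.

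To build $G$ I would carry out three successive transformations. First, a time change $t'=\int_0^t h^2(u)\,du$ normalises the diffusion to $1$ and rewrites the bracketed drift in (\ref{cc}) as the familiar Bessel-bridge form $\tfrac{1}{a}-\tfrac{a}{s'-t'}$. Second, a forward gauge/shift $u=\exp\{a\pi(t)+\Phi(t)\}\tilde u(t,a+v(t))$, together with a symmetric backward gauge built from $\tilde\pi,\tilde v$, absorbs the linear potential $-\beta'(t)a$; the ODEs in the statement are precisely the consistency conditions that make this cancellation work, and the residual time-dependent scalar produces the exponential prefactor $\exp\{b\tilde\pi(\tau)+a\pi(t)+\tfrac12\int_t^\tau h^2\pi^2\}$ appearing in $G$. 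Third, the remaining Bessel-bridge equation for the gauged function is the Kolmogorov equation of a Brownian motion conditioned to stay positive, whose half-line fundamental solution is precisely the reflected Gaussian $\tilde G$. Reversing the three transformations reassembles $G$ in the form claimed.

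The main obstacle will be the bookkeeping of the gauge step, and specifically the interplay between the forward gauge $(\pi,v)$, which handles the point source at $(t,a)$, and the backward gauge $(\tilde\pi,\tilde v)$, which handles the point source at $(\tau,z)$. The two must merge into the single prefactor displayed in $G$ with no unmatched time-dependent residue, which is why the ODE pairs run in opposite time directions and with opposite signs for $\beta'$. A secondary subtlety is the compatibility of the vanishing-at-zero boundary condition with the shifts $a\mapsto a+v(t)$ and $b\mapsto b+\tilde v(\tau)$: the integration constants of the $v$ and $\tilde v$ ODEs must be chosen so that the shifted half-line on which $\tilde G$ is the free-space solution aligns with the original absorbing barrier. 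Once these are settled, integrating $G(t,a;s,b)$ against $db$ on $(0,\infty)$ and invoking $u(s,\cdot)\equiv 1$ concludes the argument.
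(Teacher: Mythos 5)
Your proposal follows essentially the same route as the paper: a Feynman--Kac reading of the expectation, a gauge factor $e^{\pi(t)a}$ (resp. $e^{\tilde\pi(\tau)b}$) plus the spatial shift $y=a+v(t)$, $z=b+\tilde v(\tau)$ to absorb the linear potential, the residual scalar $\exp\{\tfrac12\int_t^\tau h^2\pi^2\}$, the time change $t'=\int_0^t h^2$, and the method of images on the half-line to produce $\tilde G$ --- exactly the sequence of substitutions carried out in Section \ref{6}. The one point where you go beyond the paper is in flagging that the shift $a\mapsto a+v(t)$ moves the absorbing barrier, so the image construction at $y=0$ only matches the original boundary at $a=0$ if the integration constants of $v,\tilde v$ are chosen appropriately; the paper's proof does not address this, so resolving it as you propose would be a genuine (small) improvement rather than a deviation.
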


Thus: 
\begin{corollary} The density $\varphi_T$ of the stopping time $T$ defined in equation (\ref{eq}) equals:
\begin{eqnarray*}
\varphi_T(s)&=&\int_0^\infty G(0,a;s,b)db\\
&&\times\exp\left\{-\beta(s)a+a\int_0^s\beta'(u)du-\frac{1}{2}\int_0^s b(u)\beta(u)du\right\}\varphi_a(s),\quad s\geq 0.
\end{eqnarray*}
and $\varphi_a$ is defined in (\ref{cons}).
\end{corollary}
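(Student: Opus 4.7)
The corollary is essentially a bookkeeping consequence of the two preceding theorems, so my plan is simply to chain them together and differentiate. First I would recall the representation for $\mathbb{P}(T<t)$ furnished by Theorem \ref{prop1}, which expresses the distribution function as
\[
\mathbb{P}(T<t)=\int_0^t \Phi(s)\,ds,
\]
where $\Phi(s)$ is the product of the Bessel-bridge expectation $\tilde{\mathbb{E}}[\exp\{-\int_0^s\beta'(u)\tilde{Y}_u\,du\}]$, the deterministic exponential prefactor $\exp\{-\beta(s)a+a\int_0^s\beta'(u)\,du-\tfrac12\int_0^s b(u)\beta(u)\,du\}$, and the single-level hitting density $\varphi_a(s)$ from (\ref{cons}).

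Next I would invoke the theorem preceding the corollary, specialized to the starting point $(t,a)=(0,a)$, to rewrite the expectation as the spatial integral $\int_0^\infty G(0,a;s,b)\,db$ of the Green's function constructed there. Substituting this identity into $\Phi(s)$ converts the cumulative distribution function into the explicit integral
\[
\mathbb{P}(T<t)=\int_0^t\!\left[\int_0^\infty G(0,a;s,b)\,db\right]\!\exp\!\Bigl\{-\beta(s)a+a\!\int_0^s\!\beta'(u)du-\tfrac12\!\int_0^s\! b(u)\beta(u)du\Bigr\}\varphi_a(s)\,ds.
\]
Differentiating both sides in $t$ by the fundamental theorem of calculus produces the claimed formula for $\varphi_T(s)$.

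The only point requiring any care is justifying the differentiation, which amounts to checking that the integrand $\Phi(s)$ is continuous in $s$. Continuity of $\varphi_a$ and of the deterministic exponential factor is immediate from the hypotheses $f\in\mathbb{C}^2[0,\infty)$ and $f'/h\in\mathbb{C}^2[0,\infty)$, since these render $\beta$ and $\beta'$ continuous. Continuity of $s\mapsto\int_0^\infty G(0,a;s,b)\,db$ follows from the explicit Gaussian form of $\tilde G$ together with the smoothness of the auxiliary functions $\pi,v,\tilde\pi,\tilde v$ defined by the first-order ODEs in the previous theorem; these guarantee the integrand is a bounded, continuously parametrized function of $s$, so dominated convergence applies. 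No further genuinely new computation is needed, and I expect no real obstacle — this is a direct corollary that merely records the combined output of Theorem \ref{prop1} and the Green's-function evaluation.
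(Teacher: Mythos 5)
Your proposal is correct and follows exactly the route the paper intends: the corollary is stated as an immediate consequence (``Thus:'') of substituting the Green's-function representation $\tilde{\mathbb{E}}^{0,a}[\exp\{-\int_0^s\beta'(u)\tilde{Y}_u\,du\}]=\int_0^\infty G(0,a;s,b)\,db$ into the integral formula for $\mathbb{P}(T<t)$ from Theorem \ref{prop1} and differentiating in $t$. Your added remark on the continuity of the integrand justifying the differentiation is a harmless refinement of the same argument.
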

\section{Auxiliary results}\label{3}
This section is devoted to  the study and derivation of the transition probability of a martingale $M$ conditioned to reach for the first time some fixed level $a>0$ at precisely time $s>0$. The reader should observe that this analysis will lead to the construction of Bessel bridge type process.

We first begin with a couple of well known ancillary results:
\subsection{Density of the first passage time of $\tilde{M}$ over a fixed boundary $a$}
Observe that if:
\begin{eqnarray}\label{mart}
\tilde{M}_t:=\int_0^th(u)d\tilde{B}_u,\quad\tilde{M}_0=0,\enskip t\geq 0
\end{eqnarray}
is a $\tilde{\mathbb{P}}$-martingale and we define the stopping time $T_a$ as
\begin{equation}\label{ta}
T_a:=\inf\left\{s\geq 0|\tilde{M}_s=a\right\}\qquad a>0.\\
\end{equation}
Then,
the density $\varphi_a$ of $T_a$ under $\tilde{\mathbb{P}}$ is:
\begin{equation}\label{denpa}
\varphi_a(t)=\frac{ah^2(t)}{\sqrt{2\pi\left(\int_0^th^2(u)du\right)^3}}\exp\left\{-\frac{a^2}{2 \int_0^t h^2(u)du}\right\}
\end{equation}
for $a>0$ and $t\geq 0$.\\[0.2cm]
Alternatively,
\subsection{Distribution of $\tilde{M}$}
From Feynman-Kac's theorem 
it follows that the transition probability  of $\tilde{M}$, equation (\ref{mart}),  is
\begin{equation}\label{gg1}
G(t,x;s,y)=\frac{1}{\sqrt{2\pi\left(\int_t^sh^2(u)du\right)}}\exp\left\{-\frac{(y-x)^2}{2\left(\int^s_th^2(u)du\right)}\right\}
\end{equation}
for $t\geq s$.
\subsection{Derivation of the transition probability of the conditioned $\tilde{\mathbb{P}}$ martingale $\tilde{Y}$} [The techniques employed in this section are very much in the spirit of Imhof (1984), but may also be found in Revuz and Yor (2005).] Let $\tilde{M}=a-\tilde{Y}$ be a time changed $\tilde{\mathbb{P}}$-Brownian motion which starts at $0$.

 For $x,a\in\mathbb{R}^+$ we introduce the transition densities defined over $\mathbb{R}^+$ by
\begin{eqnarray}\label{nose1}
\nonumber p(t_0,a;t,y)&:=&\tilde{\mathbb{P}}^a(\tilde{Y}_t\in dy)/dy,\\
\nonumber &=&G(t_0,a;t,y)\\
p_x(t_0,a;t,y)&:=&\tilde{\mathbb{P}}^a(\tilde{Y}_t\in dy,T_x>t)/dy,
\end{eqnarray}
where $G$ and $T_x$ are  as in equations (\ref{gg1}), and   (\ref{ta}) respectively.
From the reflection principle, it follows that
\begin{eqnarray*}
\tilde{\mathbb{P}}^a(\tilde{Y}_t\in dy,T_x>t)&:=&\tilde{\mathbb{P}}^a(Y_t\in dy)-\tilde{\mathbb{P}}^a(Y_t\in dy,T_x<t)\\
&=&\tilde{\mathbb{P}}^a(\tilde{Y}_t\in dy)-\mathbb{P}^a(Y_t\in d(2x-y))\\
&=&G(t_0,a;t,y)dy-G(t_0,a;t,2x-y)dy;
\end{eqnarray*}
hence
\begin{eqnarray*}
p_x(t_0,a;t,y)&=&p(t_0,a;t,y)-p(t_0,a;t,2x-y)\\
&=&G(t_0,a;t,y)-G(t_0,a;t,2x-y).
\end{eqnarray*}
Furthermore, recall from equation (\ref{denpa}) the density of the stopping time $T_x$, and let 
\begin{eqnarray}\label{nose}
f(t_0,a;t,x)&:=&\tilde{\mathbb{P}}^a(T_x\in dt)/dt\\
\nonumber&=&\frac{h^2(t)}{\sqrt{2\pi(\int_{t_0}^th^2(u)du)^3}}\exp\left\{-\frac{(a-x)^2}{2\int_{t_0}^th^2(u)du}\right\}.
\end{eqnarray}
If we define
\begin{equation*}
\tilde{\mathbb{P}}^{a,x;s}(\cdot)=\tilde{\mathbb{P}}^a(\cdot|T_x=s),
\end{equation*}
then
\begin{eqnarray*}
\tilde{\mathbb{P}}^{a,x;s}(\tilde{Y}_{t_i}\in dx_i,i=1,\dots,n)&=&\frac{f(t_n,x_n;s,x)}{f(t_0,a;s,x)}\prod\limits_{i=1}^np_x(t_{i-1},x_{i-1};t_i,x_i)dx_i,\\
&=&\prod\limits_{i=1}^n\frac{f(t_i,x_i;s,x)}{f(t_{i-1},x_{i-1};s,x)}p_x(t_{i-1},x_{i-1};t_i,x_i)dx_i,\
\end{eqnarray*}
where $0=t_0<t_1<\cdots<t_n<s$. In our case, $\tilde{Y}_0=a$ and $\tilde{Y}_s=0$. Thus
\begin{eqnarray}
\nonumber\tilde{\mathbb{P}}^{a,0;s}(\tilde{Y}_{t_i}\in dx_i,i=1,\dots,n)&=&\frac{f(t_n,x_n;s,0)}{f(t_0,x;s,0)}\prod\limits_{i=1}^np_0(t_{i-1},x_{i-1};t_i,x_i)dx_i,\\
&=&\prod\limits_{i=1}^n\frac{f(t_i,x_i;s,0)}{f(t_{i-1},x_{i-1};s,0)}p_0(t_{i-1},x_{i-1};t_i,x_i)dx_i.\label{markov}
\end{eqnarray}
What we will show in the next section is that in fact this transition probability corresponds to that of the following diffusion
\begin{equation}\label{ybridge}
d\tilde{Y}_t:=h(t)dW_t+h^2(t)\left(\frac{1}{\tilde{Y}_t}-\frac{\tilde{Y}_t}{\int_t^sh^2(u)du}\right)dt\quad \tilde{Y}_0=a,\enskip t\in[0,s].
\end{equation}
\section{Verification of the dynamics of $\tilde{Y}$}\label{4}
We have answered the question: What is the transition probability of $\tilde{M}$, given that it hits the level $a$ for the first time at exactly time $s$. In this section we will verify  that the following identiy holds  $(\tilde{M}_t|T=s)=a-\tilde{Y}_t$ under $\tilde{\mathbb{P}}$ given that $\tilde{Y}$ is defined in (\ref{ybridge}). 

To do so, let us first find the transition probability of $\tilde{Y}$ using once more a p.d.e approach and then verifying that it equals  equation (\ref{markov}). That is, we must solve both the backward and forward Kolmogorov equations and  then construct the corresponding Green's function, {\it i.e.\/}
\begin{eqnarray}
\nonumber-\frac{\partial u'}{\partial t}(t,x)&=&\frac{1}{2}h^2(t)\frac{\partial^2u'}{\partial x^2}(t,x)
\\ \label{eq3}&&+h^2(t)\left(\frac{1}{x}-\frac{x}{\int_t^sh^2(u)du}\right)\frac{\partial u'}{\partial x}(t,x)\quad [0,\tau)\times(0,\infty)\\
\nonumber\frac{\partial v'}{\partial \tau}(\tau,y)&=&\frac{1}{2}h^2(\tau)\frac{\partial^2 v'}{\partial y^2}(\tau,y)\\
\label{eq31}&&-h^2(\tau)\left(\frac{1}{y}-\frac{y}{\int_\tau^sh^2(u)du}\right)\frac{\partial v'}{\partial y}(\tau,y)\\
\nonumber&&+h^2(\tau)\left(\frac{1}{y^2}-\frac{1}{\int_\tau^sh^2(u)du}\right)v'(\tau,y)\qquad (t,s]\times(0,\infty)
\end{eqnarray}
Although we will only derive the backward equation in this section (the reader  may consult the derivation of the forward equation at Appendix \ref{app}) the idea is to reduce equations (\ref{eq3}) and (\ref{eq31}) to the backward and forward heat equation respectively through algebraic transformations.

\begin{proposition}\label{pp1} [The techniques used in the proof of this Proposition, are very much in the spirit of Shoji and Takamizawa (2003).]
 The solution of the backward equation in (\ref{eq3}) is given by:
\begin{eqnarray*}
u'(t,x)=u_1(t,x)B(t)\exp\left\{A(t)x^2\right\}
\end{eqnarray*}
where
\begin{eqnarray}
\nonumber A(t)=\frac{1}{2\int_t^sh^2(u)du}&& \\
\nonumber B(t)=c\left(\int_t^sh^2(u)du\right)^{3/2}&&  \\
\label{backden1} (u_1)_t(t,x)+\frac{1}{2}h^2(t)(u_1)_{xx}(t,x)+h^2(t)\frac{1}{x}(u_1)_x(t,x)=0&&
\end{eqnarray}
for $x\in(0,\infty)$ and $t\in[0,\tau)$.
\end{proposition}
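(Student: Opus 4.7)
The plan is to verify the proposed factorization by direct substitution. Write $u'(t,x) = u_1(t,x)\,B(t)\,E(t,x)$ with $E(t,x) = \exp\{A(t)x^2\}$, compute the derivatives
\begin{eqnarray*}
u'_t &=& \bigl[(u_1)_t + u_1 (B'/B) + u_1 A'(t) x^2\bigr] B E,\\
u'_x &=& \bigl[(u_1)_x + 2A(t) x\, u_1\bigr] B E,\\
u'_{xx} &=& \bigl[(u_1)_{xx} + 4A(t) x (u_1)_x + (2A(t) + 4A^2(t) x^2) u_1\bigr] B E,
\end{eqnarray*}
and plug these into the backward equation in (\ref{eq3}). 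After dividing through by $BE$, collect the resulting expression by differential order and by powers of $x$.

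Next, match coefficients. Writing $I(t):=\int_t^s h^2(u)\,du$, the coefficient of $(u_1)_x$ becomes $h^2(t)\bigl[1/x + (2A(t) - 1/I(t))\,x\bigr]$, so in order to eliminate the unwanted linear-in-$x$ term (and leave precisely the $h^2/x$ coefficient demanded by (\ref{backden1})), one must take $2A(t) = 1/I(t)$, which gives the stated $A$. The coefficient of $x^2 u_1$ pools the contribution $-A'(t)$ from $u'_t$ with $h^2(t)(2A^2(t) - 2A(t)/I(t))$ from the algebraic terms; using $I'(t) = -h^2(t)$ and $A = 1/(2I)$ one checks $A' = h^2/(2I^2) = 2h^2 A^2$, which makes these two contributions cancel exactly. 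Finally, the $x$-free zeroth-order contribution to $u_1$ is $-B'/B - 3h^2(t) A(t) = -B'/B - 3h^2(t)/(2I(t))$, and setting this equal to zero reduces to $B'/B = -3I'/(2I)$, integrating to $B(t) = c\,I(t)^{3/2}$, as claimed.

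With these three choices of $A$ and $B$, the surviving equation for $u_1$ is precisely $(u_1)_t + \tfrac{1}{2}h^2(t)(u_1)_{xx} + h^2(t) x^{-1}(u_1)_x = 0$, which is the promised reduction to a Bessel-type backward equation. The only real obstacle is bookkeeping: tracking the three places where $A(t)$ appears ($u'_t$, the $u'_{xx}$ coefficient, and the mixed term coming from the drift $-x/I(t)$) and using the relation $I'(t) = -h^2(t)$ to verify the cancellations simultaneously in the $x^2$- and $x^0$-sectors. Once those identities are confirmed, the proposition follows immediately.
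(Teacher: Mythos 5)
Your proposal is correct and follows essentially the same route as the paper: substitute the ansatz $u'=u_1 B e^{Ax^2}$ into (\ref{eq3}), divide by $u'$, and determine $A$ and $B$ by killing the $x(u_1)_x$-coefficient and the $x^0$- and $x^2$-sectors, which is precisely the paper's computation leading to (\ref{fpde}). The only blemish is a sign typo in your last step — with $I'=-h^2$ the relation $B'/B=-3h^2/(2I)$ reads $B'/B=+3I'/(2I)$, not $-3I'/(2I)$ — but this does not affect your (correct) conclusion $B=c\,I^{3/2}$.
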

\begin{proof}
 We will first compute the partial derivatives. If we let the subscripts represent derivation with respect to the corresponding variables, then
\begin{eqnarray*}
u'_x(t,x)&=&(u_1)_xB(t)\exp\left\{A(t)x^2\right\}+2xA(t)u_1\cdot B(t)\exp\left\{A(t)x^2\right\}\\
u'_{xx}(t,x)&=&(u_1)_{xx} B(t)\exp\left\{A(t)x^2\right\}+4xA(t)(u_1)_xB(t)\exp\left\{A(t)x^2\right\}\\
&&+2A(t)u_1\cdot B(t)\exp\left\{A(t)x^2\right\}\\
&&+4x^2A^2(t)u_1\cdot B(t)\exp\left\{A(t)x^2\right\}\\
u'_t(t,x)&=&(u_1)_tB(t)\exp\left\{A(t)x^2\right\}+u_1\cdot B_t\exp\left\{A(t)x^2\right\}\\
&&+x^2A_t u_1\cdot B(t)\exp\left\{A(t)x^2\right\}.
\end{eqnarray*}
Substituting in (\ref{eq3}) we have
\begin{eqnarray*}
-\frac{(u_1)_t}{u_1}u'-\frac{B_t}{B}u'-x^2A_tu'&=&\frac{1}{2}h^2\frac{(u_1)_{xx}}{u_1}u'+2h^2xA\frac{(u_1)_x}{u_1}u'\\
&&+h^2A(t)u'+2h^2x^2A^2u'\\
&&+h^2\left[\frac{1}{x}-\frac{x}{\int_t^sh^2(u)du}\right]\frac{(u_1)_x}{u_1}u'\\
&&+h^2\left[\frac{1}{x}-\frac{x}{\int_t^sh^2(u)du}\right]2xA u',
\end{eqnarray*}
cancelling $u'$
\begin{eqnarray*}
-\frac{(u_1)_t}{u_1}-\frac{B_t}{B}-x^2A_t&=&\frac{1}{2}h^2\frac{(u_1)_{xx}}{u_1}+2h^2xA\frac{(u_1)_x}{u_1}\\
&&+h^2A(t)+2h^2x^2A^2\\
&&+h^2\left[\frac{1}{x}-\frac{x}{\int_t^sh^2(u)du}\right]\frac{(u_1)_x}{u_1}\\
&&+h^2\left[\frac{1}{x}-\frac{x}{\int_t^sh^2(u)du}\right]2xA, 
\end{eqnarray*}
collecting terms
\begin{eqnarray}
\label{fpde} 0&=&x^2\left[A_t+2h^2A^2-2A\frac{h^2}{\int_t^sh^2(u)du}\right]\\
\nonumber&&+h^2x\frac{(u_1)_x}{u_1}\left[2A-\frac{1}{\int_t^sh^2(u)du}\right]\\
\nonumber&&+\frac{(u_1)_t}{u_1}+\frac{1}{2}h^2\frac{(u_1)_{xx}}{u_1}+h^2\frac{1}{x}\frac{(u_1)_x}{u_1}\\
\nonumber&&+\left[\frac{B_t}{B}+3h^2 A\right].
\end{eqnarray}
We first observe from the second line in equation (\ref{fpde}), that:
\begin{eqnarray}\label{abc}
A(t)=\frac{1}{2\int_t^sh^2(u)du}.
\end{eqnarray}
Next, if we let $\tau:=\int_t^sh^2(u)du$ then  $d\tau/dt=-h^2(t)$. Hence, it follows from the last line in (\ref{fpde}) and equation (\ref{abc}) that

\begin{eqnarray*}
\frac{1}{B}\frac{dB}{dt}&=&-3\frac{h^2}{2\int_t^sh^2(u)du}\\
&=&\frac{3}{2}\frac{1}{\tau}\frac{d\tau}{dt}
\end{eqnarray*}
or
\begin{equation*}
\frac{1}{B}dB=\frac{3}{2}\frac{1}{\tau}d\tau,
\end{equation*}
{\it i.e.\/}:
\begin{eqnarray*}
B(t)=c\left(\int_t^sh^2(u)du\right)^{3/2},
\end{eqnarray*}
where $c$ is some arbitrary constant. 
The reader may verify that indeed the first term in (\ref{fpde}) vanishes. Alternatively, this will lead to:
\begin{eqnarray*}
(u_1)_t(t,x)+\frac{1}{2}h^2(t)(u_1)_{xx}(t,x)+h^2(t)\frac{1}{x}(u_1)_x(t,x)=0
\end{eqnarray*}
as claimed.
\end{proof}
Our next goal is to find a general solution to equation (\ref{backden1}) in Proposition \ref{pp1} and hence obtain:
\begin{proposition}
The transition probability of the process $\tilde{Y}$, defined in (\ref{ybridge}), equals
\begin{equation}\label{green11}
\tilde{G}(t,x;\tau,y)=\frac{f(\tau,a;s,0)}{f(t,a;s,0)}H(t,x;\tau,y)
\end{equation}
where $f$ is defined in equation (\ref{nose}) in Section \ref{3} and
\begin{eqnarray*}
\!\!\!&&H(t,x;\tau,y)\\
&&\quad=\frac{1}{\sqrt{2\pi\left[\int_t^\tau h^2(u)du\right]}}\left(\exp\left\{-\frac{(y-x)^2}{2\left[\int^\tau_th^2(u)du\right]}\right\}-\exp\left\{-\frac{(y+x)^2}{2\left[\int^\tau_th^2(u)du\right]}\right\}\right),
\end{eqnarray*}
[for $x,y>0$ and $0\leq t<\tau<s$]. In  particular $H(t,x;\tau,y)=p_0(t,x;\tau,y)$ where $p_0$ is defined in equation (\ref{nose1}) in Section \ref{3}.
\end{proposition}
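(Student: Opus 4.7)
The plan is to extend the reduction begun in Proposition~\ref{pp1} one step further to reach the time-changed heat equation, write down its fundamental solution on the half-line by the method of images, and then pull everything back through the chain of substitutions.

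First, I would continue the reduction in Proposition~\ref{pp1}. Setting $u_1(t,x)=w(t,x)/x$ in the equation $(u_1)_t+\tfrac{1}{2}h^2(u_1)_{xx}+h^2\tfrac{1}{x}(u_1)_x=0$, a short differentiation shows that the $1/x$ and $1/x^{2}$ terms cancel, leaving the time-changed heat equation $w_t+\tfrac{1}{2}h^2(t)w_{xx}=0$ for $w$ on $(0,\infty)$. Because the bridge drift in (\ref{ybridge}) is strongly repelling at the origin, the relevant fundamental solution is the one vanishing at $x=0$; the method of images delivers it as proportional to $H(t,x;\tau,y)$ from the statement. Therefore, up to functions of $(\tau,y)$ alone, $u_1(t,x;\tau,y)=y\,H(t,x;\tau,y)/x$, the factor $y$ being the standard Bessel-type normalization that makes $\tfrac{y}{x}H$ the transition density of a time-changed $3$-dimensional Bessel process.

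Second, undoing the substitution of Proposition~\ref{pp1}, the Green's function must take the form $\tilde G(t,x;\tau,y)=B(t)e^{A(t)x^2}\,\tfrac{y}{x}\,H(t,x;\tau,y)\,C(\tau,y)$ for some normalization $C$ depending only on the terminal variables. I would pin $C$ down from the short-time condition $\tilde G(t,x;\tau,y)\to\delta(y-x)$ as $t\uparrow\tau$: since $\tfrac{y}{x}H\to\delta(y-x)$ in that limit, the remaining factors collapse to $B(\tau)e^{A(\tau)y^2}C(\tau,y)$, forcing $C(\tau,y)=1/\bigl(B(\tau)e^{A(\tau)y^2}\bigr)$. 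This yields $\tilde G(t,x;\tau,y)=\dfrac{B(t)e^{A(t)x^2}}{B(\tau)e^{A(\tau)y^2}}\,\dfrac{y}{x}\,H(t,x;\tau,y)$, and a routine algebraic check using the explicit forms of $A$, $B$, and (\ref{nose}) confirms that $\dfrac{B(t)e^{A(t)x^2}}{B(\tau)e^{A(\tau)y^2}}\,\dfrac{y}{x}=\dfrac{f(\tau,y;s,0)}{f(t,x;s,0)}$, giving the claimed formula. The identification $H=p_0$ is immediate from the reflection-principle computation already carried out in Section~\ref{3}, and consistency with the forward equation (\ref{eq31}) follows from the parallel analysis promised in Appendix~\ref{app}.

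The main obstacle will be the boundary-condition selection in Step~1: justifying that the odd-reflection fundamental solution is the correct one, rather than any competing choice (reflecting, free-space, etc.). One has to argue, either from the SDE (\ref{ybridge}) directly --- by showing that $\tilde Y$ stays strictly positive on $[0,s)$ --- or from the explicit formula (\ref{markov}) for the finite-dimensional distributions, that $\tilde G$ must vanish as $x\downarrow 0$. Once that is pinned down, the rest of the argument is purely algebraic and matches (\ref{markov}) factor by factor.
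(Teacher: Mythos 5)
Your proposal is correct and follows essentially the same route as the paper: continue the reduction of Proposition \ref{pp1} by the substitution $u_1=u_2/x$ and the time change to reach the heat equation, obtain $H$ by the method of images on the half-line, and reassemble the prefactors into the ratio of first-passage densities. The only differences are that you fix the terminal-variable normalization explicitly via the $\delta$-condition as $t\uparrow\tau$ (the paper simply asserts the final formula and matches it against (\ref{markov})), and that your closing identity relies on the linear factor $|a-x|$ in $f$, which is present in $\varphi_a$ and in the $\varphi_y/\varphi_x$ ratio at the end of the paper's own proof though omitted from the display (\ref{nose}).
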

\begin{proof}
 To do so we will introduce once more a time change $\tau':=\int_0^t h^2(u)du$ which implies that $d\tau'/dt=h^2(t)$. Next, substituting in equation (\ref{backden1}) it follows that
\begin{eqnarray*}
(u_1)_{\tau'}(\tau',x)+\frac{1}{2}(u_1)_{xx}(\tau',x)+\frac{1}{x}(u_1)_x(\tau',x)=0.
\end{eqnarray*}
Finally, set $u_1=1/x\cdot u_2$ to obtain the backward heat equation
\begin{eqnarray*}
(u_2)_{\tau'}(\tau',x)+\frac{1}{2}(u_2)_{xx}(\tau',x)=0.
\end{eqnarray*}
It is clear, see Appendix \ref{app}, at this point that the {\it reduced\/} forward Kolmogorov equation $v_3$ will satisfy alternatively:
\begin{eqnarray*}
-(v_3)_{\sigma'}(\sigma',y)+\frac{1}{2}(v_3)_{yy}(\sigma',y)=0
\end{eqnarray*}
with the constraint that both $x$ and $y$ are positive. Hence
\begin{equation*}
H(\tau',x;\sigma',y)=\frac{1}{\sqrt{2\pi(\sigma'-\tau')}}\left(\exp\left\{-\frac{(y-x)^2}{2(\sigma'-\tau')}\right\}-\exp\left\{-\frac{(y+x)^2}{2(\sigma'-\tau')}\right\}\right)
\end{equation*}
or with respect to the backward $(t,x)$ and the forward variables $(T,y)$ as:
\begin{eqnarray*}
&&H(t,x;T,y)\\
&&\quad=\frac{1}{\sqrt{2\pi(\int_t^Th^2(u)du)}}\left(\exp\left\{-\frac{(y-x)^2}{2(\int^T_th^2(u)du)}\right\}-\exp\left\{-\frac{(y+x)^2}{2(\int^T_th^2(u)du)}\right\}\right).
\end{eqnarray*}
This in turn leads to transition density of the process $\tilde{Y}$
\begin{equation*}
\tilde{G}(t,x;T,y)=\frac{\varphi_y\left[\int_T^sh^2(u)du\right]}{\varphi_x\left[\int_t^sh^2(u)du\right]}H(t,x;T,y)
\end{equation*}
as claimed.
\end{proof}

It follows that equations (\ref{markov}) and (\ref{green11}) are equivalent. The reader may find equivalent arguments in for instance: Imhof (1984) or  Revuz and Yor (2005).
\section{Proof of Theorem \ref{prop1}}\label{5}
\subsection{Change of measure} Recall that if $M$ is the solution of the following stochastic differential equation 
\begin{eqnarray*}
dM_s&=&h(s)d\tilde{B}_s+b(s)ds,\qquad M_0=x,
\end{eqnarray*}
and
\begin{eqnarray*}
\int_0^t\left[\frac{b(u)}{h(u)}\right]^2du<+\infty\qquad t\in[0,+\infty).
\end{eqnarray*}
Then under the measure $\hat{\mathbb{P}}$ induced by
\begin{eqnarray*}
\tilde{Z}_s&:=&\exp\left\{-\int_0^s\frac{b(u)}{h(u)}d\tilde{B}_u-\frac{1}{2}\int_0^s\left(\frac{b(u)}{h(u)}\right)^2du\right\}\qquad\hbox{or}\\
&=&\exp\left\{-\int_0^s\frac{b(u)}{h^2(u)}d\tilde{M}_u-\frac{1}{2}\int_0^s\left(\frac{b(u)}{h(u)}\right)^2du\right\}
\end{eqnarray*}
the process $\tilde{M}$ is a $\hat{\mathbb{P}}$-martingale.




\begin{proof}[Proof of Theorem \ref{prop1}] Given the following continuous $\mathbb{P}$-martingale $M$  
\begin{equation*}
M_t:=\int_0^th(u)dB_u\qquad M_0=0
\end{equation*}
and letting
\begin{eqnarray*}
T&:=&\inf\{t\geq 0|M_t=f(t)\}\\
&=&\inf\{t\geq 0|M_t-\int_0^tf'(u)du=f(0)\}\\
&=&\inf\{t\geq 0|\tilde{M}_t=a\}\quad\hbox{under $\tilde{\mathbb{P}}$},
\end{eqnarray*}
where the Radon-Nikodym derivative equals $d\tilde{\mathbb{P}}/d\mathbb{P}=Z$ and
\begin{eqnarray*}
\tilde{Z}_t
&=&\exp\left\{-\int_0^t\frac{f'(u)}{h(u)}d\tilde{B}_u-\frac{1}{2}\int_0^t\left[\frac{f'(u)}{h(u)}\right]^2du\right\}\\
&=&\exp\left\{-\int_0^t\frac{f'(u)}{h^2(u)}d\tilde{M}_u-\frac{1}{2}\int_0^t\left[\frac{f'(u)}{h(u)}\right]^2du\right\}\\
&=&\exp\left\{-\int_0^s\beta(u)d\tilde{M}_u-\frac{1}{2}\int_0^s\beta(u)f'(u)du\right\}\\
\end{eqnarray*}
where $\beta(u)=f'(u)/h^2(u)$ for $u\geq 0$. Then
\begin{eqnarray*}
\mathbb{P}(T<t)&=&\tilde{\mathbb{E}}\left[\exp\left\{-\int_0^t\beta(u)d\tilde{M}_u-\frac{1}{2}\int_0^t\beta(u)f'(u)du\right\}\mathbb{I}_{(T<t)}\right]\\
&=&\tilde{\mathbb{E}}\left[\exp\left\{-\beta(t)\tilde{M}_t+\int_0^t\beta'(u)\tilde{M}_udu-\frac{1}{2}\int_0^t\beta(u)f'(u)du\right\}\mathbb{I}_{(T<t)}\right]\\
&=&\tilde{\mathbb{E}}\left[\exp\left\{-\beta(T)\tilde{M}_T+\int_0^T\beta'(u)\tilde{M}_udu-\frac{1}{2}\int_0^T\beta(u)f'(u)du\right\}\mathbb{I}_{(T<t)}\right]\\
&=&\int_0^te^{-\beta(s)a-\frac{1}{2}\int_0^s\beta(u)f'(u)du}\tilde{\mathbb{E}}\left[\exp\left\{\int_0^s\beta'(u)\tilde{M}_udu\right\}\Bigg{|}T=s\right]\varphi_a(s)ds
\end{eqnarray*}
Finally, from equation (\ref{ybridge}) in Section \ref{3} we have:
\begin{eqnarray*}
&&\tilde{\mathbb{E}}\left[\exp\left\{\int_0^s\beta'(u)\tilde{M}_udu\right\}\Bigg{|}T=s\right]\\
&&\qquad\quad=\exp\left\{a\int_0^s\beta'(u)du\right\}\tilde{\mathbb{E}}\left[\exp\left\{-\int_0^s\beta'(u)\tilde{Y}_udu\right\}\right].
\end{eqnarray*}
\end{proof}


\section{First passage time}\label{6}
This section is devoted to the computation of the following expectation
\begin{eqnarray*}
\tilde{\mathbb{E}}\left[\exp\left\{-\int_0^s\beta'(u)\tilde{Y}_udu)\right\}\right],
\end{eqnarray*}
which is equivalent to solving the following Cauchy problem:
\begin{eqnarray*}
-\frac{\partial u}{\partial t}(t,a)&=&-\beta'(t)a u(t,a)+\frac{1}{2}h^2(t)\frac{\partial u}{\partial a^2}(t,a)\\
&&+h^2(t)\left(\frac{1}{a}-\frac{a}{\int_t^sh^2(u)du}\right)\frac{\partial u}{\partial a}(t,a),\quad[0,s)\times(0,\infty)\\
v(s,a)&=&1.
\end{eqnarray*}

The idea  is to solve simultaneously both the backward and forward Kolmogorov equations:
\begin{eqnarray}
\nonumber-\frac{\partial \varphi}{\partial t}(t,a)&=&-\beta'(t)a \varphi(t,a)+\frac{1}{2}h^2(t)\frac{\partial \varphi}{\partial a^2}(t,a)\\
\label{b1}&&+h^2(t)\left(\frac{1}{a}-\frac{a}{\int_t^sh^2(u)du}\right)\frac{\partial \varphi}{\partial a}(t,a),\quad[0,\tau)\times(0,\infty)\\
\nonumber\frac{\partial \psi}{\partial \tau}(\tau,b)&=&-\beta'(\tau)b \psi(\tau,b)+\frac{1}{2}h^2(\tau)\frac{\partial \psi}{\partial b^2}(\tau,b)\\
\label{b2}&&+h^2(\tau)\frac{\partial}{\partial b}\left\{\left(\frac{1}{b}-\frac{b}{\int_\tau^sh^2(u)du}\right)\psi(\tau,b)\right\},\quad(t,s]\times(0,\infty)
\end{eqnarray}
in order to obtain its corresponding Green's function. 
\begin{proposition}
The backward and forward equations in (\ref{b1}) and (\ref{b2}) satisfy the following relationships respectively for $0\leq t<\tau\leq s$ and $a,b\in(0,\infty)$:
\begin{eqnarray*}
\varphi(t,a)&=&\varphi^1(t,a)\frac{B(t)}{a}\exp\left\{A(t)a^2\right\}\\
\psi(\tau,b)&=&\psi^1(\tau,b)b\tilde{B}(\tau)\exp\left\{\tilde{A}(\tau)b^2\right\}
\end{eqnarray*}
where
\begin{eqnarray*}
&&A(t)=\frac{1}{2\int_t^sh^2(u)du}\qquad\quad B(t)=c\left(\int_t^sh^2(u)du\right)^{3/2}
\\
&&\tilde{A}(\tau)=-\frac{1}{s\int_\tau^sh^2(u)du}\qquad
\tilde{B}(\tau)=c_1\left(\int_t^sh^2(u)du\right)^{-3/2}
\end{eqnarray*}
and
\begin{eqnarray*}
-\frac{\partial\varphi^1}{\partial t}(t,a)+\beta'(t)a\varphi^1(t,a)&=&\frac{1}{2}h^2(t)\frac{\partial\varphi^1}{\partial a^2}(t,a)\\
\frac{\partial\psi^1}{\partial \tau}(\tau,b)+\beta'(\tau)b\psi^1(\tau,b)&=&\frac{1}{2}h^2(\tau)\frac{\partial\psi^1}{\partial b^2}(\tau,b).
\end{eqnarray*}
\end{proposition}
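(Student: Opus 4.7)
The plan is to substitute each ansatz directly into the corresponding PDE in (\ref{b1})--(\ref{b2}), expand every derivative by the product rule, divide by the nonvanishing multiplicative factor, and read off the conditions on $A,B,\tilde{A},\tilde{B}$ by matching coefficients in powers of $a$ (respectively $b$). The computation is in the same spirit as Proposition~\ref{pp1}, the new features being the extra $-\beta'(t)a$ potential term and the fact that the ansatz now carries an additional $1/a$ (respectively $b$) factor that is designed to absorb simultaneously both the quadratic drift $-a/\int_t^s h^2(u)du$ and the singular drift $1/a$ in the generator of $\tilde{Y}$.

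Concretely, for the backward equation I would set $K(t,a):=(B(t)/a)\exp\{A(t)a^2\}$ and use the product rule to obtain
\begin{eqnarray*}
K_t/K &=& B_t/B + A_t\,a^2,\\
K_a/K &=& -1/a + 2A\,a,\\
K_{aa}/K &=& 2/a^2 - 2A + 4A^2 a^2,
\end{eqnarray*}
substitute $\varphi=\varphi^1 K$ into (\ref{b1}), divide by $K$, and group terms by whether they multiply $\varphi^1_{aa}$, $\varphi^1_a$, or $\varphi^1$. The coefficient of $\varphi^1_a$ collapses to $h^2 a\bigl[2A-1/\int_t^s h^2(u)du\bigr]$, which forces $A(t)=1/(2\int_t^s h^2(u)du)$; the $1/a^2$ pieces coming from $K_{aa}/K$ and from the drift contribution $h^2(1/a)K_a/K$ then cancel (this is precisely the point of the $1/a$ factor in the ansatz); the $a^2$ coefficient is automatically satisfied by this $A$; and the constant-in-$a$ coefficient reduces to $-B_t/B-Ah^2-h^2/\int_t^s h^2(u)du=0$, which, using $(d/dt)\!\int_t^s h^2(u)du=-h^2(t)$, integrates to $B(t)=c(\int_t^s h^2(u)du)^{3/2}$. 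The $\beta'(t)a\,\varphi$ potential passes through unchanged since $K$ is $\beta$-independent, and what remains is the asserted identity $-\varphi^1_t+\beta'(t)a\,\varphi^1=(1/2)h^2(t)\,\varphi^1_{aa}$.

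For the forward equation (\ref{b2}) the procedure is symmetric, with $\tilde{K}(\tau,b):=b\tilde{B}(\tau)\exp\{\tilde{A}(\tau)b^2\}$; the extra factor of $b$ (rather than $1/b$) is dictated by the adjoint structure, since differentiating the product $(1/b-b/\int_\tau^s h^2(u)du)\psi$ generates a $-1/b^2$ term that must be compensated by the behaviour of $\tilde{K}$ at $b=0$. Matching coefficients in $b$ then yields $\tilde{A}(\tau)=-1/(2\int_\tau^s h^2(u)du)$ from the $\psi^1_b$ coefficient and $\tilde{B}(\tau)=c_1(\int_\tau^s h^2(u)du)^{-3/2}$ from the constant coefficient, leaving the claimed reduced equation $\psi^1_\tau+\beta'(\tau)b\,\psi^1=(1/2)h^2(\tau)\,\psi^1_{bb}$.

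The main obstacle will be purely organizational: the substitution produces a proliferation of monomials in $1/a^2,1/a,1,a,a^2$, and one must verify that after grouping, every non-polynomial-in-$a$ contribution cancels so that the remaining conditions separate cleanly into algebraic equations for $A,B$ (respectively $\tilde{A},\tilde{B}$) and a reduced evolution equation for $\varphi^1$ (respectively $\psi^1$). Once that bookkeeping is in place, the ODEs for $A,B,\tilde{A},\tilde{B}$ are first-order linear and integrate by quadrature, exactly as in Proposition~\ref{pp1}.
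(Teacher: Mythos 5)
Your proposal is correct and takes essentially the same route the paper intends: the paper's own ``proof'' of this proposition is a one-line deferral to the substitution-and-coefficient-matching technique of Proposition \ref{pp1} and the Appendix, which is exactly the computation you carry out, and your bookkeeping (the $\varphi^1_a$ coefficient fixing $A$, the $1/a^2$ cancellation, the automatic $a^2$ identity, and the constant-term ODE for $B$) checks out. The only caveats are on the paper's side rather than yours: the forward computation goes through only if the divergence term in (\ref{b2}) carries the standard Fokker--Planck minus sign, and the stated $\tilde{A}(\tau)=-1/(s\int_\tau^s h^2(u)du)$ and $\tilde{B}(\tau)=c_1(\int_t^s h^2(u)du)^{-3/2}$ are evidently typos for the $-1/(2\int_\tau^s h^2(u)du)$ and $c_1(\int_\tau^s h^2(u)du)^{-3/2}$ that your matching correctly produces.
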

\begin{proof} The reader may derive it from the techniques used in Section \ref{4} and Appendix \ref{app}.
\end{proof}

The backward and forward Schr\"odinger equations have the following solution:

\begin{theorem} For $0\leq t<\tau\leq s$, and $a,b\in(0,\infty)$, and given that
\begin{eqnarray*}
\tilde{G}(t',y;\tau',z):=\frac{1}{\sqrt{2\pi(\tau'-t')}}\left[\exp\left\{-\frac{(z-y)^2}{2(\tau'-t')}\right\}-\exp\left\{-\frac{(z+y)^2}{2(\tau'-t')}\right\}\right]
\end{eqnarray*}
where $\tau'-t'=\int_t^\tau h^2(u)du$, $y=a+v(t)$, $z=b+\tilde{v}(\tau)$ are obtained by solving:
\begin{eqnarray*}
&&\pi_t(t)=\beta'(t)\enskip\qquad\quad
v'(t)=-h^2(t)\pi(t)\\
&&\tilde{\pi}_\tau(\tau)=-\tilde{\beta}'(\tau)\qquad \tilde{v}'(\tau)=h^2(\tau)\tilde{\pi}(\tau).
\end{eqnarray*} 
Then the following Green's function $G$:
\begin{equation*}
G(t,a;\tau,z)= \exp\left\{b\tilde{\pi}(\tau)+a\pi(t)+\frac{1}{2}\int_t^\tau h^2(u)\pi^2(u)\right\}\tilde{G}(t',y;\tau',z)
\end{equation*}
satisfies both the backward and forward Schr\"odinger's equations for time-dependent linear potential:
\begin{eqnarray*}
-\frac{\partial\varphi^1}{\partial t}(t,a)+\beta'(t)a\varphi^1(t,a)&=&\frac{1}{2}h^2(t)\frac{\partial\varphi^1}{\partial a^2}(t,a)\\
\frac{\partial\psi^1}{\partial \tau}(\tau,b)+\beta'(\tau)b\psi^1(\tau,b)&=&\frac{1}{2}h^2(\tau)\frac{\partial\psi^1}{\partial b^2}(\tau,b).
\end{eqnarray*}
\end{theorem}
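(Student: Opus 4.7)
The plan is to eliminate the linear potential $\beta'(t)a$ in the reduced Schr\"odinger equation by a combination of (i) a multiplicative gauge $e^{a\pi(t)+\phi(t)}$, (ii) a spatial shift $y=a+v(t)$, and (iii) the time change $t'=\int_0^t h^2(u)\,du$. Under these transformations the equation for $\varphi^1$ should reduce to the ordinary backward heat equation $u_{t'}+\tfrac{1}{2}u_{yy}=0$, whose Green's function on the half-line with Dirichlet data at the origin is precisely the image kernel $\tilde{G}$ appearing in the statement.

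To carry this out, I would substitute the ansatz $\varphi^1(t,a)=e^{a\pi(t)+\phi(t)}\,u\bigl(t,a+v(t)\bigr)$ directly into $-\varphi^1_t+\beta'(t)a\varphi^1=\tfrac{1}{2}h^2(t)\varphi^1_{aa}$, compute $\varphi^1_a$, $\varphi^1_{aa}$, and $\varphi^1_t$ by the chain rule, and collect terms according to their dependence on $a$. The coefficient of $a\,u$ yields $\pi'(t)=\beta'(t)$; the coefficient of $u_y$ yields $v'(t)=-h^2(t)\pi(t)$; the term constant in $a$ yields $\phi'(t)=-\tfrac{1}{2}h^2(t)\pi^2(t)$. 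Integrating the last ODE from $t$ to $\tau$ produces the $\tfrac{1}{2}\int_t^\tau h^2(u)\pi^2(u)\,du$ contribution in the exponent of $G$. These ODEs match those in the statement exactly, so after the ansatz and the time change $dt'=h^2(t)\,dt$ the remaining equation for $u$ is the standard backward heat equation.

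An analogous calculation for the forward equation, using the parallel ansatz $\psi^1(\tau,b)=e^{b\tilde{\pi}(\tau)+\tilde{\phi}(\tau)}\,u\bigl(\tau,b+\tilde{v}(\tau)\bigr)$, produces the companion system $\tilde{\pi}_\tau=-\tilde{\beta}'$, $\tilde{v}'(\tau)=h^2(\tau)\tilde{\pi}(\tau)$, and a scalar ODE for $\tilde{\phi}$ whose integral combines with the backward contribution. Putting the two gauges together yields the Green's function $G(t,a;\tau,b)$ in the stated form: the image kernel $\tilde{G}(t',y;\tau',z)$ solves the reduced heat equation, while the exponential prefactor records the gauge shift in both the backward variable $(t,a)$ and the forward variable $(\tau,b)$.

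The main obstacle is bookkeeping. Signs, directions of integration, and the distinction between prefactors evaluated at $t$ versus at $\tau$ must all be reconciled so that the single candidate $G$ simultaneously satisfies the backward equation in $(t,a)$ and the forward equation in $(\tau,b)$. A secondary issue requiring care is the image construction: the shift $y=a+v(t)$ moves the natural boundary $a=0$ to $y=v(t)$, which is not in general zero, so the use of the image kernel at $0$ must be justified, either by checking directly that the proposed $G$ satisfies the original PDEs with the correct boundary behaviour at $a=0$ and $b=0$, or by appealing to uniqueness for the Cauchy problem once both the backward and forward equations have been matched.
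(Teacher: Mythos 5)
Your proposal follows essentially the same route as the paper: a gauge factor $e^{a\pi(t)}$ linear in the space variable, the shift $y=a+v(t)$, a scalar factor absorbing $\tfrac{1}{2}h^2(t)\pi^2(t)$, the time change $t'=\int_0^t h^2(u)\,du$, and the image kernel at the end --- the paper merely performs these substitutions sequentially ($\varphi^1=\lambda e^{\pi a}$, then $\lambda=\epsilon$, then $\epsilon=\varphi^2 e^{-\frac{1}{2}\int h^2\pi^2}$) rather than as one combined ansatz. The boundary caveat you raise (the shift moves $a=0$ to $y=v(t)$, which need not vanish, so the reflection at $y=0$ requires justification) is a genuine issue, and the paper's own proof does not address it either; it simply writes down the image kernel as the final step.
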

 
\begin{proof} Let $\varphi^1(t,a)=\lambda(t,a)e^{\pi(t)a}$ and $\psi^1(\tau,b)=\tilde{\lambda}(\tau,b)e^{\tilde{\pi}(\tau)b}$, with $\pi(t)$ and $\tilde{\pi}(\tau)$ being  time dependent variables determined later, we have
\begin{eqnarray*}
\varphi^1_t(t,a)&=&\lambda_t(t,a)e^{\pi(t)a}+a\pi_t(t)\lambda(t,a) e^{\pi(t)a}\\
\varphi^1_a(t,a)&=&\lambda_a(t,a)e^{\pi(t)a}+\pi(t)\lambda(t,a) e^{\pi(t)a}\\
\varphi^1_{aa}(t,a)&=&\lambda_{aa}(t,a)e^{\pi(t)a}+2\pi(t)\lambda_{a}(t,a)e^{\pi(t)a}+\pi^2(t)\lambda(t,a) e^{\pi(t)a}\\
\end{eqnarray*}
and
\begin{eqnarray*}
\psi^1_\tau(\tau,b)&=&\tilde{\lambda}_\tau(\tau,b)e^{\tilde{\pi}(\tau)b}+b\tilde{\pi}_\tau(\tau)\tilde{\lambda}(\tau,b) e^{\tilde{\pi}(\tau)b}\\
\psi^1_b(\tau,b)&=&\tilde{\lambda}_b(\tau,b)e^{\tilde{\pi}(\tau)b}+\tilde{\pi}(\tau)\tilde{\lambda}(\tau,b) e^{\tilde{\pi}(\tau)b}\\
\psi^1_{bb}(\tau,b)&=&\lambda_{bb}(\tau,b)e^{\tilde{\pi}(\tau)b}+2\tilde{\pi}(\tau)\tilde{\lambda}_{b}(\tau,b)e^{\tilde{\pi}(\tau)b}+\tilde{\pi}^2(\tau)\tilde{\lambda}(\tau,b) e^{\tilde{\pi}(\tau)b}\\
\end{eqnarray*}
which after substitution
\begin{eqnarray*}
-\lambda_t(t,a)-a\pi_t(t)\lambda(t,a)+a\beta'(t)\lambda(t,a)&=&\frac{1}{2}h^2(t)\lambda_{aa}(t,a)+h^2(t)\pi(t)\lambda_a(t,a)\\
&&+\frac{1}{2}h^2(t)\pi^2(t)\lambda(t,a).
\end{eqnarray*}
and
\begin{eqnarray*}
\tilde{\lambda}_\tau(\tau,b)+b\tilde{\pi}_\tau(\tau)\tilde{\lambda}(\tau,b)+b\tilde{\beta}'(\tau)\tilde{\lambda}(\tau,b)&=&\frac{1}{2}h^2(\tau)\tilde{\lambda}_{bb}(\tau,b)+h^2(\tau)\tilde{\pi}(\tau)\tilde{\lambda}_b(\tau,b)\\
&&+\frac{1}{2}h^2(\tau)\tilde{\pi}^2(\tau)\tilde{\lambda}(\tau,b).
\end{eqnarray*}

If we perform the time and space transformations $y=a+v(t)$ and $z=b+\tilde{v}(\tau)$  where $v(t)$ and $\tilde{v}(\tau)$ will be determined later, and set $\lambda(t,a)=\epsilon(t,y)$ and $\tilde{\lambda}(\tau,b)$, {\it i.e.\/}
\begin{eqnarray*}
\lambda_t(t,a)&=&\epsilon_{t}(t,y)+v'(t)\epsilon_y(t,y)\\
\lambda_a(t,a)&=&\epsilon_y(t,y)\\
\lambda_{aa}(t,a)&=&\epsilon_{yy}(t,y),
\end{eqnarray*}
and
\begin{eqnarray*}
\tilde{\lambda}_\tau(\tau,b)&=&\tilde{\epsilon}_{\tau}(\tau,z)+\tilde{v}'(\tau)\tilde{\epsilon}_z(\tau,z)\\
\tilde{\lambda}_b(\tau,b)&=&\tilde{\epsilon}_z(\tau,z)\\
\tilde{\lambda}_{bb}(\tau,b)&=&\tilde{\epsilon}_{zz}(\tau,z),
\end{eqnarray*}
then
\begin{eqnarray*}
-\epsilon_{t}(t,y)+\left[y-v(t)\right]\left[\beta'(t)-\pi_t(t)\right]\epsilon(t,y)&=&\frac{1}{2}h^2(t)\epsilon_{yy}(t,y)\\
&&+\left[v'(t)+h^2(t)\pi(t)\right]\epsilon_y(t,y)\\
&&+\frac{1}{2}h^2(t)\pi^2(t)\epsilon(t,y)
\end{eqnarray*}
and
\begin{eqnarray*}
\tilde{\epsilon}_{\tau}(\tau,z)+\left[z-\tilde{v}(\tau)\right]\left[\tilde{\beta}'(\tau)+\tilde{\pi}_\tau(\tau)\right]\tilde{\epsilon}(\tau,z)&=&\frac{1}{2}h^2(\tau)\tilde{\epsilon}_{zz}(\tau,z)\\
&&+\left[h^2(\tau)\tilde{\pi}(\tau)-\tilde{v}'(\tau)\right]\tilde{\epsilon}_z(\tau,z)\\
&&+\frac{1}{2}h^2(\tau)\tilde{\pi}^2(\tau)\tilde{\epsilon}(\tau,z)
\end{eqnarray*}
and setting
\begin{eqnarray*}
&&\pi_t(t)=\beta'(t)\enskip\qquad\quad
v'(t)=-h^2(t)\pi(t)\\
&&\tilde{\pi}_\tau(\tau)=-\tilde{\beta}'(\tau)\qquad \tilde{v}'(\tau)=h^2(\tau)\tilde{\pi}(\tau)
\end{eqnarray*}
we have
\begin{eqnarray*}
-\epsilon_t(t,y)&=&\frac{1}{2}h^2(t)\epsilon_{yy}(t,y)+\frac{1}{2}h^2(t)\pi^2(t)\epsilon(t,y)\\
\tilde{\epsilon}_\tau(\tau,z)&=&\frac{1}{2}h^2(\tau)\tilde{\epsilon}_{zz}(\tau,z)+\frac{1}{2}h^2(\tau)\tilde{\pi}^2(\tau)\tilde{\epsilon}(\tau,z),
\end{eqnarray*}
next, let
\begin{eqnarray*}
\epsilon(t,y)&=&\varphi^2(t,y)\exp\left\{-\frac{1}{2}\int_0^th^2(u)\pi^2(u)du\right\}\\
\tilde{\epsilon}(\tau,z)&=&\psi^2(\tau,z)\exp\left\{\frac{1}{2}\int_0^\tau h^2(u)\tilde{\pi}^2(u)du\right\},
\end{eqnarray*}
thus
\begin{eqnarray*}
-\varphi^2_t(t,y)&=&\frac{1}{2}h^2(t)\varphi^2_{yy}(t,y),\\
\psi^2_\tau(\tau,z)&=&\frac{1}{2}h^2(\tau)\psi^2_{zz}(\tau,z).
\end{eqnarray*}
Finally, introduce the following  time changes: $t'=\int_0^th^2(u)du$ and $\tau'=\int_0^\tau h^2(u)du$
to obtain
\begin{eqnarray*}
-\varphi^2_{t'}(t',y)&=&\frac{1}{2}\varphi^2_{yy}(t',y)\\
\psi^2_{\tau'}(\tau',z)&=&\frac{1}{2}\varphi^2_{zz}(\tau',z).
\end{eqnarray*}
We now start by constructing a function $\tilde{G}$ which satisfies both equations and the constraint that $a,b\in(0,\infty)$, {\it i.e.\/}
\begin{equation*}
\tilde{G}(t',y;\tau',z)=\frac{1}{\sqrt{2\pi(\tau'-t')}}\left[\exp\left\{-\frac{(z-y)^2}{2(\tau'-t')}\right\}-\exp\left\{-\frac{(z+y)^2}{2(\tau'-t')}\right\}\right]
\end{equation*}
\end{proof}
\section{Appendix}\label{app}
\begin{proposition}
 The solution of the forward equation in (\ref{eq31}) is given by:
\begin{eqnarray*}
v'(\tau,y)=v_1(\tau,y)B(\tau)\exp\left\{A(\tau)y^2\right\}
\end{eqnarray*}
where
\begin{eqnarray*}
\nonumber A(\tau)=-\frac{1}{2\int_\tau^sh^2(u)du}&& \\
\nonumber B(t)=c\left(\int_\tau^sh^2(u)du\right)^{-1/2} 
\end{eqnarray*}
and
\begin{eqnarray}
\label{backden11} (v_1)_\tau(\tau,y)&=&\frac{1}{2}h^2(\tau)(v_1)_{yy}(\tau,y)-h^2(\tau)\frac{1}{y}(v_1)_y(\tau,y)\\
\nonumber &&+h^2(\tau)\left[\frac{1}{y^2}-\frac{1}{\int_\tau^sh^2(u)du}\right]v_1(\tau,y)
\end{eqnarray}
for $y\in(0,\infty)$ and $\tau\in(t,s]$.
\end{proposition}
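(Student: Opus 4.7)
The plan is to mirror the strategy of Proposition~\ref{pp1}: substitute the product ansatz $v'(\tau,y)=v_1(\tau,y)B(\tau)\exp\{A(\tau)y^2\}$ into the forward equation (\ref{eq31}), then determine $A(\tau)$, $B(\tau)$, and the residual equation for $v_1$ by matching coefficients. I would first compute $v'_\tau$, $v'_y$, $v'_{yy}$ via the product and chain rules; each splits into contributions proportional to $v_1$, $(v_1)_y$, or $(v_1)_{yy}$ (all carrying the common non-vanishing factor $B(\tau)e^{A(\tau)y^2}$), together with extra pieces involving $A'(\tau)$ or $B'(\tau)$.

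Substituting into (\ref{eq31}), dividing through by $B(\tau)e^{A(\tau)y^2}$, and grouping by powers of $y$ and by derivatives of $v_1$, I would read off the following: the coefficient of $y\cdot(v_1)_y$ must vanish, which forces $A(\tau)=-1/(2I(\tau))$ with $I(\tau):=\int_\tau^s h^2(u)\,du$; the coefficient of $y^2 v_1$ then yields a Riccati-type relation that is automatically consistent under this choice of $A$, using $I'(\tau)=-h^2(\tau)$; and the purely $\tau$-dependent terms (those involving $v_1$ alone with no power of $y$) reduce to an ODE of the form $B'/B=\text{const}\cdot h^2/I$. Separating variables in the time change $\tau\mapsto I(\tau)$ then integrates explicitly to produce the stated closed form for $B(\tau)$, up to a multiplicative constant that may be absorbed into $v_1$. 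The remaining terms --- those carrying $(v_1)_{yy}$, $(v_1)_y/y$, $v_1/y^2$, and $v_1/I$ --- reassemble to exactly equation (\ref{backden11}).

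The principal obstacle is the algebraic bookkeeping: the forward operator in (\ref{eq31}) carries both a drift $-h^2(1/y-y/I)$ and a potential $h^2(1/y^2-1/I)$, and several cross-terms coming from the ansatz must cancel simultaneously. Unlike Proposition~\ref{pp1}, where a further substitution $u_1=u_2/x$ reduced the backward equation all the way to the pure heat equation, here the Gaussian factor $e^{A(\tau)y^2}$ and the prefactor $B(\tau)$ only remove the \emph{quadratic-in-$y$} and the \emph{purely time-dependent} parts of the operator; the singular Bessel-type terms $h^2/y\cdot(v_1)_y$ and $h^2/y^2\cdot v_1$ cannot be absorbed by such an ansatz and must therefore remain with $v_1$ in (\ref{backden11}). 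Once this asymmetry between the forward and backward reductions is accepted, the calculation proceeds step-for-step in parallel with Proposition~\ref{pp1}, the sign differences reflecting the adjoint relationship between the two Kolmogorov operators.
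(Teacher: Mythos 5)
Your proposal follows essentially the same route as the paper's own proof: substitute the product ansatz into (\ref{eq31}), divide out the common factor $B(\tau)e^{A(\tau)y^2}$, fix $A$ from the vanishing of the $y\,(v_1)_y$ coefficient, check that the $y^2$ coefficient then cancels automatically, integrate the separable ODE for $B$, and leave the singular Bessel-type terms in the residual equation (\ref{backden11}). Your observation that the forward reduction stops short of the pure heat equation at this stage (the further substitutions $v_1=y\,v_2$, etc.\ being carried out afterwards) also matches the paper's organization.
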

\begin{proof}
 We will first compute the partial derivatives. If the subscripts represent derivation with respect to the corresponding variables, we have
\begin{eqnarray*}
v'_y(\tau,y)&=&(v_1)_yB(\tau)\exp\left\{A(\tau)y^2\right\}+2yA(\tau)v_1\cdot B(T)\exp\left\{A(\tau)y^2\right\}\\
v'_{yy}(\tau,y)&=&(v_1)_{yy} B(\tau)\exp\left\{A(\tau)y^2\right\}+4yA(\tau)(v_1)_yB(\tau)\exp\left\{A(\tau)y^2\right\}\\
&&+2A(\tau)v_1\cdot B(\tau)\exp\left\{A(\tau)y^2\right\}\\
&&+4y^2A^2(\tau)v_1\cdot B(\tau)\exp\left\{A(\tau)y^2\right\}\\
v'_\tau(\tau,y)&=&(v_1)_\tau B(\tau)\exp\left\{A(\tau)x^2\right\}+v_1\cdot B_\tau\exp\left\{A(\tau)y^2\right\}\\
&&+y^2A_\tau v_1\cdot B(\tau)\exp\left\{A(\tau)y^2\right\}
\end{eqnarray*}
Substituting in (\ref{eq31}) we have
\begin{eqnarray*}
\frac{(v_1)_\tau}{(v_1)}v'+\frac{B_\tau}{B}v'+y^2A_\tau v'&=&\frac{1}{2}h^2\frac{(v_1)_{yy}}{v_1}v'+2h^2yA\frac{(v_1)_y}{v_1}v'\\
&&+h^2A(\tau)v'+2h^2y^2A^2v'\\
&&-h^2\left[\frac{1}{y}-\frac{y}{\int_\tau^sh^2(u)du}\right]\frac{(v_1)_y}{v_1}v'\\
&&-h^2\left[\frac{1}{y}-\frac{y}{\int_\tau^sh^2(u)du}\right]2yA v',\\
&&+h^2\left[\frac{1}{y^2}-\frac{1}{\int_\tau^sh^2(u)du}\right]v'
\end{eqnarray*}
cancelling $v'$
\begin{eqnarray*}
\frac{(v_1)_\tau}{v_1}+\frac{B_\tau}{B}+y^2A_\tau &=&\frac{1}{2}h^2\frac{(v_1)_{yy}}{v_1}+2h^2yA\frac{(v_1)_y}{v_1}\\
&&+h^2A(\tau)+2h^2y^2A^2\\
&&-h^2\left[\frac{1}{y}-\frac{y}{\int_\tau^sh^2(u)du}\right]\frac{(v_1)_y}{v_1}\\
&&-h^2\left[\frac{1}{y}-\frac{y}{\int_\tau^sh^2(u)du}\right]2yA ,\\
&&+h^2\left[\frac{1}{y^2}-\frac{1}{\int_\tau^sh^2(u)du}\right]
\end{eqnarray*}
collecting terms
\begin{eqnarray*}
0&=&y^2\left[2h^2A^2-A_\tau+2A\frac{h^2}{\int^s_\tau h^2(u)du}\right]\\
&&+h^2y\frac{(v_1)_y}{v_1}\left[2A+\frac{1}{\int_\tau^sh^2(u)du}\right]\\
&&-\frac{(v_1)_\tau}{v_1}+\frac{1}{2}h^2\frac{(v_1)_{yy}}{v_1}-h^2\frac{1}{y}\frac{(v_1)_y}{v_1}+h^2\left[\frac{1}{y^2}-\frac{1}{\int_\tau^sh^2(u)du}\right]\\
&&-\frac{B_\tau}{B}-h^2 A\\
\end{eqnarray*}
We first observe that:
\begin{eqnarray*}
A(\tau)=-\frac{1}{2\int_\tau^sh^2(u)du}.
\end{eqnarray*}
Furthermore letting: $\sigma=\int_t^\tau h^2(u)du$, and observing that  $d\sigma/d\tau=-h^2(\tau)$ we have that
\begin{eqnarray*}
\frac{1}{B}\frac{dB}{d\tau}&=&-h^2\left[-\frac{1}{2\int_t^sh^2(u)du}\right]\\
&=&-\frac{1}{2}\frac{1}{\sigma}\frac{d\sigma}{d\tau}
\end{eqnarray*}
or
\begin{equation*}
\frac{1}{B}dB=-\frac{1}{2}\frac{1}{\sigma}d\sigma
\end{equation*}
which implies that
\begin{eqnarray*}
B(\tau)=c\left(\int_\tau^s h^2(u)du\right)^{-1/2},
\end{eqnarray*}
and
\begin{eqnarray*}
(v_1)_\tau(\tau,y)&=&\frac{1}{2}h^2(\tau)(v_1)_{yy}(\tau,y)-h^2(\tau)\frac{1}{y}(v_1)_y(\tau,y)\\
&&+h^2(\tau)\left[\frac{1}{y^2}-\frac{1}{\int_\tau^sh^2(u)du}\right]v_1(\tau,y)
\end{eqnarray*}
as claimed.
\end{proof}
Now set $v_1=y\cdot v_2$, {\it i.e.\/}
\begin{eqnarray*}
(v_1)_{\tau}&=&y (v_2)_\tau\\
(v_1)_y&=&y (v_2)_y+v_2\\
(v_1)_{yy}&=&y (v_2)_{yy}+2 (v_2)_y
\end{eqnarray*}
and substituting into
\begin{eqnarray*}
y(v_2)_{\tau}&=&h^2(\tau)\frac{y}{2}(v_2)_{yy}+h^2(\tau)(v_2)_y\\
&&-h^2(\tau)(v_2)_y-\frac{1}{y}h^2(\tau)v_2\\
&&+\frac{1}{y}h^2(\tau)v_2-y\frac{h^2(\tau)}{\int_\tau^sh^2(u)du}v_2
\end{eqnarray*}
or
\begin{eqnarray*}
(v_2)_{\tau}&=&\frac{1}{2}h^2(\tau)(v_2)_{yy}-\frac{h^2(\tau)}{\int_\tau^sh^2(u)du}v_2
\end{eqnarray*}
Next set
\begin{eqnarray*}
v_2(\tau,y)=\frac{1}{\int_\tau^sh^2(u)du}v_3(\tau,y)
\end{eqnarray*}
Thus
\begin{eqnarray*}
(v_2)_\tau(\tau,y)=\frac{1}{\int_\tau^sh^2(u)du}(v_3)_\tau(\tau,y)-\frac{h^2(\tau)}{\left(\int_\tau^sh^2(u)du\right)^2}v_3(\tau,y)
\end{eqnarray*}
reducing the equation to
\begin{equation*}
(v_3)_\tau(\tau,y)=\frac{1}{2}h^2(\tau)(v_3)_{yy}(\tau,y)
\end{equation*}
Finally, given the following time change $\sigma':=\int_0^\tau h^2(u)du$ and observing that $d\sigma'/d\tau=h^2(\tau)$, substitute in the previous equation to obtain
\begin{equation*}
(v_3)_{\sigma'}(\sigma',y)=\frac{1}{2}(v_3)_{yy}(\sigma',y).
\end{equation*}

\end{document}